\documentclass[12pt]{article}

\usepackage{amsthm,amsmath,amsfonts,epsfig}
\usepackage{natbib}
\usepackage{epstopdf}
\bibliographystyle{abbrvnat}
\setcitestyle{authoryear, open={(},close={)}}

\theoremstyle{plain}
\newtheorem{thm}{Theorem}[section]
\newtheorem*{thm*}{Theorem}

\newtheorem*{prop*}{Proposition}

\newtheorem*{Lemma*}{Lemma}

\theoremstyle{remark}
\newtheorem{rem}[thm]{Remark}
\newtheorem{defin}{Definition}
\newtheorem{example}{Example}

\newcommand{\norm}[2]{\left\Vert #2\right\Vert_{#1}}
\newcommand{\gep}{\varepsilon}

\newcommand{\qqd}{\qquad}

\newcommand{\figuraqui}[3]{\begin{figure}[h!]\begin{center}\includegraphics[width=#3in]{#2}\caption{#1}\end{center}\end{figure}}

\newcommand{\en}{\in}
\newcommand{\abs}[1]{\left\vert\, #1\,\right\vert}
\newcommand{\espe}[2]{\mathbb{E}_{#1}\left[#2\right]}
\newcommand{\proba}[2]{\mathbb{P}_{#1}\left[#2\right]}
\newcommand{\ops}{\mathcal{B}(H)}
\newcommand{\prodint}[2]{\left<#1,#2\right>}
\newcommand{\indica}[1]{\mathversion{bold} 1\!\!1\left(#1\right)}

\providecommand{\keywords}[1]{\textbf{Keywords:} #1}

\title{The Functional AR(1) process with a unit root}
\author{Nelson Muriel\\ \small{Centro de Investigaci\'on en Matem\'aticas A.C.}\\ \small{Jalisco S/N, Col. Valenciana}\\
\small{36240, Guanajuato, Mexico}}
\date{}
\begin{document}

\maketitle

\begin{abstract}
We define strong and weak unit roots for the functional AR(1) process and
give some theoretical examples. It is shown that a functional form of cointegration
occurs in which only a finite number of common trends exist.  Using functional
Principal Component Analysis we illustrate the presence of functional unit roots
in two demographic data sets.  We close with some remarks concerning our assumptions
and the possibility of generalizing our results.
\end{abstract}

\keywords{Unit Roots, Cointegration, Functional Data, Functional Time Series, Functional Principal Components}

\section{Introduction}

Random variables with values in a functional space arise naturally in many fields. Examples of its use in areas as diverse as Criminology, Paleopathology or Medicine, among others, are provided in \cite{ramsay02}.  The theory of inference for stochastic processes in continuous time of \cite{Grenander81} is built on the basis of viewing such processes as infinite dimensional random variables.  Via this identification, prediction of a continuous time stochastic processes $\{Y_t\}$ becomes viable by defining
\[\{X_n = Y_t\indica{t\en [n\delta, (n+1)\delta]}, n\en \mathbb{Z}\}.\]
The choice of $\delta$ will depend of the particular application at hand.  Observe that the temporal index of the process has switched from continuous to discrete, so that $\{X_n\}$ defines an infinite dimensional time series.

Just as in the finite-dimensional case, linear time series in functional spaces provide a good approximation for stationary processes.  Also, some well known diffusions can be represented linearly in functional spaces as shown in \cite{RePEc:eee:stapro:v:76:y:2006:i:1:p:93-108} or \cite{bosq00} for the Ornstein-Uhlenbeck process.  A most successful approach to linear modeling in functional spaces is the infinite-dimensional analog of traditional AR(1) processes.

Let $(H,\prodint{}{})$ denote a real, separable Hilbert space and let $\ops$ be the algebra of all the operators acting from $H$ to $H$.  For a given $\rho\en\ops$, define the stochastic process $\{X_n\}$ as the solution to the equations

\begin{equation}\label{e:far1}
X_n = \rho(X_{n-1})+\gep_n,
\end{equation}
with $\{\gep_n\}$ a $H$--white noise sequence.  This process admits an obvious generalization to the AR(p) specification which is developed in \cite{bosq00}.

From the point of view of applications, the selection of the order for a functional AR process is discussed in \cite{Kokoszka12}.  As the authors point out, only small values of $p \en \{0,1,2\}$ are worth considering due to the complexity of the marginal processes induced by \eqref{e:far1}.  This makes the richness of the functional AR(1) dynamics clear and justifies our decision to focus solely on this specification.

One of the major assumptions made when dealing with the functional AR(1) process in applications is that it is stationary.  A known condition for stationarity is given in \cite{bosq00}, and asks that $\norm{}{\rho^j}<1$ for some $j\en\mathbb{N}$.  Should this condition be met, the process $\{X_n\}$ admits the representation
\[X_n = \sum_{j=0}^\infty \rho^j(\gep_{n-j}).\]
It is thus clear that given any fixed $v\en H$, the real process $\{\prodint{X_n}{v}\}$ is also stationary with representation
\[\prodint{X_n}{v}=\sum_{j=0}^\infty \prodint{\gep_{n-j}}{\rho^{*j}(v)},\]
where $\rho^*$ is the adjoint of $\rho$.

As a consequence, the expansion of $\{X_n\}$ in whatever basis for $H$ will necessarily produce stationary coefficients.  This also means that the coefficients of the functional principal components (see \cite{ramsay05}) of the observed data should be stationary.  However, in some applications this is not the case.  The reason for this departure from stationarity in the estimated Principal Components (PC) coefficients may be twofold.  First, it is possible that the AR(1) specification is not stable throughout the sampling time.  Second, it may be that the AR(1) specification is stable but not stationary.  The first case has been studied in \cite{Horvath10} and the second is the topic of this paper.

It is often the case when dealing with the functional AR(1), that $\rho$ in \eqref{e:far1} is assumed integral, Hilbert-Schmidt, compact or diagonalizable. See, for example \cite{Kokoszka12}, \cite{RePEc:eee:stapro:v:76:y:2006:i:1:p:93-108}, \cite{RuizMedina2007} or \cite{raey}.  These assumptions are not really limitations to the model.  For instance, it is well known that the ideal of compact operators is the norm closure of the space of finite-rank operators as shown in \cite{ConwayACFA}.  As the statistical analysis of functional data proceeds by finite rank projections, the assumption that $\rho$ is compact amounts to saying that it can be properly approximated by in-sample operations. In particular, compactness is necessary for consistency.  We will throughout this paper assume compactness.

The organization of the paper is as follows. Functional unit roots are defined in Section 2 in terms of the point spectrum of $\rho$.  A strong and weak form are considered and we provide some basic examples.  Section 3 explores the main structural consequences of a functional unit root which admit an interpretation analogous to its finite-dimensional counterpart.  Particularly, a suitable definition of (linear) cointegration is given and a cointegrating space is shown to exist in Theorem \ref{t:unitroots}.  Furthermore, the space of common trends is shown to be finite-dimensional.  Section 4 explores two data sets from demography to illustrate the presence of functional unit roots.  The first one consists of observations of the male log-mortality rate in Italy and the second one of Australian fertility rates.  Section 5 concludes our exposition providing some remarks on our assumptions and the possibility of generalizing our results.

\section{Definitions and Examples}

Given $\rho\en\ops$ we denote by $\norm{}{\rho}$ its operator norm, by
$\sigma(\rho$) its spectrum and by $r(\rho)$ its spectral radius. Letting $I$ denote the identity operator in $\ops$, we use the isomorphism $\lambda\mapsto\lambda I$ and thus assume that $\mathbb{R}\subseteq \ops$. The use of $\lambda\en\mathbb{R}$ either as a real number of as an operator should rise no confusion.

It is proven in \cite{bosq00} that the stochastic process \eqref{e:far1} admits a stationary solution if for some $j\en\mathbb{N}$, we have $\norm{}{\rho^j}<1$.  Due to the algebraic structure of $\ops$ it is immediate that this condition is equivalent with $r(\rho)<1$.

As pointed out in the Introduction, we will assume that $\rho$ is a compact operator.  Thus the spectrum of $\rho$ is a discrete set $\{\lambda_n\}$ with $0$ as its only possible accumulation point.  The condition for stationarity becomes
\begin{equation*}
\abs{\lambda_n}<1,\qqd \forall n\en\mathbb{N}.
\end{equation*}

Since the process $\{X_t\}$ takes it values in a Hilbert space $H$, two kinds of non-stationarity may arise.  First, a strong form in which $\{X_t\}$ itself is non stationary as an $H$-valued random process. Second, a weak form in which for some $v\en H$, $\{\prodint{X_t}{v}\}$ is non-stationary as a real-valued process.

The stationarity of $\{X_t\}$ entails that of $\{\prodint{X_t}{v}\}$ for all $v\en H$, so that weak non--stationarity implies strong non--stationarity.  The following definitions make it clear which forms of non--stationarity we will be interested in.

\begin{defin}
The process \eqref{e:far1} is said to have a strong unit root if $1\en\sigma(\rho)$ and $\abs{\lambda_n}<1$ for all other $\lambda_n\en \sigma(\rho)$.
\end{defin}

For weak unit roots, we will use the notion of integrated processes of order one which will be denoted by $I(1)$.  A comprehensive account on such processes can be found in \cite{Johansen95}.

\begin{defin}
Given $v\en H$, we call $v$ a weak unit root for $\{X_t\}$ if the real-valued process $\{\prodint{X_t}{v}\}$ is $I(1)$.  The set $\{v\en H, v \text{ is a weak unit root of } \{X_t\}\}$ will be denoted by $W_X$.
\end{defin}

\begin{example}\label{ex:K}
Let $H=L^2([0,1])$ with the usual Lebesgue measure. Let $K$ be a separable Kernel of the form
\[K(s,t) = \sum_{i=1}^n a_i(s)b_i(t),\]
and define
\[\rho(v)(s) = \int_0^1 K(s,t)v(t)dt.\]
Then,
\begin{equation*}
\begin{split}
\rho (v) (s) &= \int_0^1 \sum_{i=1}^n a_i(s)b_i(t)v(t)dt\\
             &=\sum_{i=1}^n \left(\int_0^1 b_i(t)v(t)dt\right)a_i(s).
\end{split}
\end{equation*}

The operator $\rho$ is thus finite-rank and its range is the span of $\{a_i, i=1,\dotsc, n\}$.    The equation $\lambda v = \rho(v)$ is therefore only meaningful for function $v=\sum_{j=1}^n v_j a_j$ and in this case we have
\begin{equation*}
\begin{split}
\lambda v(s) &= \lambda \sum_{i=1}^n v_i a_i(s) = \sum_{i=1}^n a_i(s)\int_0^1 b_i(s)\sum_{j=1}^n v_j a_j(s)ds\\
             &=\sum_{i=1}^n a_i(s)\sum_{j=1}^n\gamma_{i,j}v_j,
\end{split}
\end{equation*}
where
\[\gamma_{i,j}=\int_0^1 a_j(t)b_i(t)dt.\]
Let $V=(v_1,\dotsc, v_n)^T$ and $A_K = \left(\gamma_{i,j}\right)_{i,j}$. From the previous calculations it is not hard to see that the eigenvalue-eigenvector equation for $\rho$ is equivalent to the finite-dimensional equation $\lambda V = A_K V$.

Thus, the functional process $\{X_n\}$ has a strong unit root if and only if the finite-dimensional process
\[Y_n = A_K Y_{n-1}+\eta_t\]
is integrated of order 1.  It would be, for instance, sufficient that $1$ is the only root of the characteristic polynomial $\det(I-A_Kz)$ with unit modulus.
\end{example}

\begin{example}[From Example 3.7 in \cite{bosq00}]\label{ex:B}
Let $\alpha\en[-1,1]$ and define
\[\rho(v) = \alpha \left(\prodint{v}{e_1}+\prodint{v}{e_2}\right)e_1 + \alpha \prodint{v}{e_1}e_2,\]
where $\{e_1, e_2\}$ is an orthonormal system in $H$. Assume that the white noise $\{\gep_n\}$ satisfies $\espe{}{\prodint{\gep_n}{e_1}^2}>0$ but $\espe{}{\prodint{\gep_n}{e_2}^2}=0$.

Since $\{\gep_n\}$ has mean zero, this last condition implies that $\prodint{\gep_n}{e_2}=0\; a.s.$ for all $n$.  Therefore, letting $\gep$ denote a generic noise variable, a recursive application of $\rho$ shows that
\[\rho^j (\gep) = \alpha^j\prodint{\gep}{e_1}(f(j)e_1+f(j-1)e_2),\]
where $\{f(j)\}$ stands for the Fibonacci sequence.  It follows that the process $\{X_n\}$ is stationary for values of $\alpha$ such that $\alpha+\alpha^2<1$.

In fact, following \cite{bosq00}, it can be seen that the process $\{Y_n=\prodint{X_n}{e_1}\}$ satisfies the equation
\[Y_n = \alpha Y_{n-1}+\alpha^2 Y_{n-2}+\prodint{\gep_n}{e_1},\]
so that for $\alpha + \alpha^2 = 1$, the real process $\{Y_n\}$ is $I(1)$.  Therefore the functional process $\{X_n\}$ has the weak unit root $e_1$ for such value of $\alpha$.

Since $\rho$ is a finite rank operator, calculations similar to the previous example will show that the eigenvalue-eigenvector equation for $\rho$ is equivalent to
\[\lambda x = \left(\begin{array}{cc}\alpha&\alpha\\ \alpha&0\end{array}\right)x.\]
For the value $\alpha=-\tfrac{1}{2}+\tfrac{\sqrt{5}}{2}$, which makes $e_1$ a weak unit root, we find that $1$ is an eigenvalue of $\rho$ and the other eigenvalue has modulus less than one, so that $\{X_n\}$ has a strong unit root.

This example illustrates that weak unit roots do not necessarily arise from a simple random walk representation
\[\prodint{X_n}{v}=\prodint{X_{n-1}}{v}+\prodint{\gep_n}{v},\]
but actually from the more general condition $\{\prodint{X_n}{v}\}\en I(1)$.  A generalization to an AR(p) specification for $\{\prodint{X_t}{e_1}\}$ can be built such that
\[\prodint{X_t}{e_1}=\sum_{i=1}^p \alpha^i \prodint{X_{t-i}}{e_1}+\prodint{\gep_n}{e_1}.\]
The value $\alpha$ for which $\sum_{i=1}^p\alpha^i=1$ will make a weak unit root of $e_1$ .
\end{example}

\begin{example}
Assume that $\rho$ is a trace-class operator, so that $\rho$ is compact and
\[\sum_n\abs{\lambda_n}<\infty.\]
The Fredholm determinant of $-\rho$ is defined as the entire function
\[p(z) = \det(I-z\rho) =\prod_{n}(1-\lambda_n z),\quad z\en\mathbb{C}.\]
The absolute summability of $\{\lambda_n\}$ implies that $p(z)$ takes on the value zero if and only if one of its terms is zero.  Thus, the roots of $p(z)$, counting multiplicities, are exactly
\[z_n = \frac{1}{\lambda_n}.\]
Therefore the process \eqref{e:far1} will have a unit root if and only if $1$ is a root of $p(z)$ and all other roots are outside the complex unit disc.  The function $p(z)$ is the exact analog of the characteristic polynomials for trace-class operators.

In particular, if $\rho$ is an integral operator with $L^2$ Kernel it is trace-class.  The numerical evaluation of $p(z)$ has been discussed in this case, and a method for its computation can be found in \cite{bornemann10}.

\end{example}

\begin{example}\label{ex:OU}
Let $H=L^2([0,1])$ and define $\rho(v) = e^{-\theta t}v(1)$.  If $\theta>0$, this operator has been used to express the Ornstein--Uhlenbeck process as a functional AR in \cite{bosq00} and \cite{RePEc:eee:stapro:v:76:y:2006:i:1:p:93-108}.  Since the range of $\rho$ is the linear span of the function $e^{-\theta t}$ the eigenvalue-eigenvector equation for $\rho$ is only meaningful for $x=\alpha e^{-\theta t}$ in which case we have
\[e^{-\theta t} \alpha e^{-\theta} = \lambda \alpha e^{-\theta t}.\]
It follows that the only eigenvalue is $\lambda = e^{-\theta}$.  Therefore, the process only has a unit root when $\theta=0$ in which case the process being represented is actually a Brownian Motion.
\end{example}

\section{The structure of a functional unit root}

We begin this section with some observations from the finite-dimensional case.  An integrated $p$-dimensional VAR(1) process can be described through
\[\triangle X_n = \Pi X_{n-1}+\eta_n,\]
where the matrix $\Pi$ is of rank $r<p$.  This condition leads to the well-known Granger's representation according to which
\begin{equation}\label{e:coint}
X_n = C\sum_{j=1}^n \eta_j + C(L) \eta_n.
\end{equation}
The matrix $C$ appearing in \eqref{e:coint} is of rank $r$ and the function $C(L)$ represents a linear filter with summable coefficients.  The reader is referred to \cite{Johansen95}, \cite{RePEc:aea:aecrev:v:81:y:1991:i:4:p:819-40}, or \cite{StockWatson88} among many others.  One of the consequences of such a representation is that in the space orthogonal to the range of $C$, the process $\{X_n\}$ is stationary, while it has a random walk component in this range space.   More precisely, if $\beta\en \mathbb{R}^p$ is such that $\beta^T C= 0$ then the real process $\{\beta^T X_n\}$ is stationary. Each such $\beta$ is known as a cointegrating relation among the variables of $\{X_n\}$, and is often interpreted as a form of equilibrium in the dynamics of the process.  The linear span of these vectors is known as the cointegrating space and is characterized by the property that $\{X_n\}$ is stationary when projected onto it.

It is worth mentioning that those vectors $\{\beta\}$ are not uniquely determined, but that the cointegrating space is.  Therefore, cointegration can be explained as a partition of $\mathbb{R}^p$ into two orthogonal (complemented) closed subspaces.  The first one of them, namely, the orthogonal complement of $Ker(C)$ in \eqref{e:coint}, is responsible for most of the variability observed in any sample path (since in it, $\{X_n\}$ behaves like a random walk).  The second space, $Ker(C)$, implies only minor variability in the sample paths.  This feature of cointegration has made the techniques from multivariate statistics useful for estimating the cointegrating space in finite dimensions as in \cite{muriel12} or \cite{Snell99}.

Taking this point of view of cointegration is particularly useful for its extension into infinite dimensions.

\begin{defin}
The functional process \eqref{e:far1} is said to cointegrate if there exists an operator $T\en \ops$ with closed range such that $\{T(X_n)\}$ is a stationary, linear functional process.
\end{defin}

The closed range assumption goes in the spirit of the previous discussion.  Since in Hilbert spaces all closed subspaces are complemented, this definition states that $H= C_X \oplus U_X$ and $\{X_n\}$ is stationary on $C_X$.  The following Theorem shows that the functional AR(1) process with unit roots cointegrates. For the proof, we will show that $\rho$ induces an appropriate decomposition of $H$.  The projection of $\{X_n\}$ to $U_X$ is a pure random walk, while that to $C_X$ is a stationary AR(1) process.

\begin{thm}\label{t:unitroots}
Let $\{X_n\}$ be the AR(1) process \eqref{e:far1} with $\rho$ compact, and assume that $\{X_n\}$ has a strong unit root.  Then
\begin{enumerate}
\item There exists a projection operator $\Pi_U$, which commutes with $\rho$, onto a closed subspace $U_X\le H$ such that $\{\Pi_U(X_n)\}$ is a pure random walk and $\dim(U_X)<\infty$
\item There exists a projection operator $\Pi_S$, which commutes with $\rho$, onto a subspace $C_X\le H$ such that $\{\Pi_S(X_t)\}$ is a stationary process functional AR process.
\item The spaces are complementary in the sense that $H = U_X \oplus C_X$
\end{enumerate}
\end{thm}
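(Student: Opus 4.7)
The key tool is the Riesz spectral calculus, which lets us decompose $H$ according to the spectral separation between the isolated eigenvalue $1$ and the remainder of $\sigma(\rho)$. Because $\rho$ is compact and the strong unit root hypothesis places $\sigma(\rho)\setminus\{1\}$ in the open unit disc (with only $0$ as a possible accumulation point), $1$ is an isolated point of $\sigma(\rho)$. Choosing a small positively oriented circle $\gamma$ around $1$ that encloses no other spectral value, I would define
\[
\Pi_U \;=\; \frac{1}{2\pi i}\oint_\gamma (zI-\rho)^{-1}\,dz,\qquad \Pi_S \;=\; I-\Pi_U.
\]
The standard Riesz theory immediately gives that $\Pi_U$ is a bounded idempotent commuting with $\rho$, that $C_X := \mathrm{Ker}(\Pi_U) = \mathrm{Range}(\Pi_S)$ and $U_X := \mathrm{Range}(\Pi_U)$ are closed and $\rho$-invariant, and that $H = U_X \oplus C_X$. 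This at once handles part (3) and the fact that both projections commute with $\rho$.

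The finite-dimensionality of $U_X$ in part (1) is a consequence of the Riesz–Schauder theory: for a compact operator and an isolated non-zero eigenvalue, the associated spectral subspace is the generalized eigenspace $\bigcup_k \mathrm{Ker}(\rho-I)^k$, which stabilizes at some finite $k$ and is finite-dimensional. For part (2), apply $\Pi_S$ to \eqref{e:far1}: because $\Pi_S$ commutes with $\rho$ and $C_X$ is $\rho$-invariant, we obtain $\Pi_S(X_n)=\rho|_{C_X}(\Pi_S(X_{n-1}))+\Pi_S(\gep_n)$. Since $\sigma(\rho|_{C_X}) = \sigma(\rho)\setminus\{1\}$ lies strictly inside the open unit disc, $r(\rho|_{C_X})<1$, so by Gelfand's formula $\norm{}{\rho^j|_{C_X}}<1$ for some $j\en\mathbb{N}$, and Bosq's existence theorem yields the stationary functional AR(1) representation of $\{\Pi_S(X_n)\}$ on $C_X$.

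For part (1) applied to the dynamics, apply $\Pi_U$ to \eqref{e:far1} to get $\Pi_U(X_n)=\rho|_{U_X}(\Pi_U(X_{n-1}))+\Pi_U(\gep_n)$. Since the spectrum of $\rho|_{U_X}$ consists only of $1$, the finite-dimensional operator $\rho|_{U_X} - I$ is quasinilpotent, hence nilpotent, so $\rho|_{U_X} = I + N$ for some nilpotent $N$.

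\textbf{Main obstacle.} To conclude that $\{\Pi_U(X_n)\}$ is a \emph{pure} random walk requires $N=0$, i.e.\ the generalized eigenspace at $1$ must coincide with $\mathrm{Ker}(\rho-I)$ with no non-trivial Jordan structure. This is automatic when $\rho$ is normal or diagonalizable (the setting of several references cited in the Introduction), but is not guaranteed by compactness alone together with the strong unit root definition as stated. The proof therefore either needs to invoke an additional spectral hypothesis on $\rho$ at the eigenvalue $1$, or to understand ``pure random walk'' in the broader sense of a finite-dimensional $I(1)$ process on $U_X$. Modulo this point, all three conclusions follow directly from the spectral decomposition above.
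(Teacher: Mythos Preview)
Your route via the Riesz spectral projection differs from the paper's. The paper simply sets $U_X=\mathrm{Ker}(\rho-I)$, takes $C_X=U_X^{\perp}$, and uses the associated \emph{orthogonal} projections $\Pi_U,\Pi_S$. With that choice $\rho|_{U_X}=I$ is automatic, so the pure random walk on $U_X$ follows immediately and your Jordan-block obstacle never appears there. In exchange, the paper asserts without argument that $\Pi_U$ commutes with $\rho$ and that $\sigma(\rho|_{C_X})=\sigma(\rho)\setminus\{1\}$; for a merely compact, non-normal $\rho$ neither claim holds in general, since $U_X^{\perp}$ need not be $\rho$-invariant. Your Riesz construction delivers commutation, invariance of both pieces, and the spectral split rigorously, at the price of the nilpotent residue $N$ you correctly flag. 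The two arguments are in fact valid under exactly the same tacit hypothesis, namely that the generalized eigenspace at $1$ coincides with $\mathrm{Ker}(\rho-I)$ (no nontrivial Jordan chain at $1$), which is automatic for normal or diagonalizable $\rho$ as you note. Your diagnosis of the obstacle is therefore accurate; the paper's proof conceals the same issue rather than avoiding it.
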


\begin{proof}
Since $\rho$ is assumed to be compact, $\dim(Ker(\rho-\lambda))<\infty$ for every $\lambda\neq 0 \en \sigma(\rho)$.  Since the process $\{X_t\}$ has a unit root, it follows that $1\en\sigma(\rho)$.  Thus, let $U_X = Ker(\rho-I)$.  Being a closed finite--dimensional subspace, it is complemented in $H$.  Let $C_X$ be its orthogonal complement so that 3. in our Theorem is satisfied.\par
Let $\Pi_U$ be the finite rank projection onto $U_X$ and $\Pi_S= I-\Pi_U$.  Then $\sigma(\rho\vert_{\Pi_U(H)})=\{1\}$ and $\sigma(\rho\vert_{\Pi_S(H)})=\sigma(\rho)\setminus\{1\}$.  Since these projection operators commute with $\rho$ it follows that
\[\Pi_U(X_n) = \Pi_U\rho(X_{n-1})+\Phi_U(\gep_n) = \rho\Pi_U (X_{n-1})+\Pi_u(\gep_n).\]
The restriction of the spectrum now implies that
\[\Pi_U(X_n)=\Pi_U(X_{n-1})+\Pi_U(\gep_n).\]
Since $\Pi_U(\gep_n)$ is white noise in $U_X$, it follows that $\Pi_U(X_n)$ is a pure random walk proving 1. Since the spectral radius of $\rho\vert_{\Pi_S(H)}$ is less than unity, a similar reasoning shows that $\Pi_S(X_n) = \rho\left(\Pi_S(X_{n-1})\right)+\Pi_S(\gep_n)$ defines a stationary AR(1) process, concluding the proof.
\end{proof}

We now provide some remarks on the Theorem which explain a bit further the implications of a functional unit root.

\begin{rem}
It follows from the Theorem that $X_t = \Pi_U X_t + \Pi_S X_t$.  Since projection operators are self--adjoint and leave its defining space unaltered, if $v\en U_X\setminus\{0\}$ we have
\[\prodint{X_n}{v}=\prodint{X_n}{\Pi_Uv}=\prodint{\Pi_U X_n}{v}.\]
Therefore $U_X\setminus\{0\}\subseteq W_X$.  By a similar reasoning, if $x\en C_X$ then $x\not\in W_X$ so that $W_X\subseteq C_X^c$.
\end{rem}

\begin{rem}
Since $\dim(U_X)=m<\infty$, let $\{\phi_1,\dotsc, \phi_m\}$ be an orthonormal basis for it.   Writing $v=\sum_{i=1}^m \alpha_i \phi_i$, it becomes apparent that for any given $v\en U_X$,
\[\prodint{X_n}{v}=\sum_{i=1}^m \alpha_i\prodint{X_n}{\phi_i}.\]
In this sense, the weak unit roots generated by elements in $U_X$ can be represented by the $m$--dimensional process $\{\prodint{X_n}{\phi_i}, i=1,\dotsc, m\}$.

Observe that this process does not cointegrate, while due to the representation just mentioned, $k$-dimensional processes of the form
\[\{\prodint{X_n}{v_i}, i=1,2,\dotsc, k; v_i\en U_X\}\]
may indeed cointegrate having some of the elements in $\{\prodint{X_t}{\phi_i}, i=1,2,\dotsc, m\}$ as common trends.
\end{rem}

\begin{rem}
The condition that $v\en W_X$ is easily seen to be equivalent to
\[\proba{}{\prodint{X_{n-1}}{\rho^*(v)-v}=0}=1, \quad \forall n.\]
Therefore,
\[W_X = Ker(\rho^*-I) \cup \{v\not\en Ker(\rho^*-I)\colon \rho^*(v)-v\en R(\{X_n\})^{\perp}\}.\]
Here, $R(\{X_n\})$ is the subspace of $H$ in which $\{X_n\}$ takes its values.  Assuming that $R(\{X_n\})$ is a dense subspace of $H$, we have that $R(\{X_n\})^{\perp}=\{0\}$ a.s.  Therefore, in this scenario $W_X = U_X$.
\end{rem}

\begin{example}
Consider again the process in Example \ref{ex:K}.  The Unit Root space can be found by solving the system
\[V=A_K V.\]
The eigenvectors $V$ will provide the coefficients for the eigenfunctions
\[f ( s ) = \sum_{i} V_i a_i(s).\]
Also, the multiplicity of $1$ as an eigenvalue of $A_K$ will determine the dimension of $U_X$ and thus the number of common trends.
\end{example}

\begin{example}
In Example \ref{ex:B}, when $\alpha$ is the value producing non-stationarity, the eigenvector is easily seen to be
\[v = 0.8506 e_1 + 0.5257 e_2.\]
Therefore the space of common trends is one-dimensional.  Also, since $U_X\subseteq W_X$, we have another weak unit root.  Some calculations show that the space of all weak unit roots is characterized by
\[\prodint{X_n}{e_1}\prodint{v}{e_1+e_2}=-\prodint{X_n}{e_2}\prodint{v}{e_1}, \qqd\text{a.s.}\]
\end{example}

\begin{example}
Compact operators can be produced from arbitrary multiplicity functions $m(\lambda)$ by means of the spectral decomposition.  This means that given $k\en \mathbb{N}$, there exists a functional AR(1) process with unit root having exactly $k$ common trends.
Indeed, let $m(\lambda)$ be a multiplicity function defined on $\abs{\lambda}\le 1$ such that $m(1)=k$ and $m(\lambda_n)<1$ for all $\abs{\lambda_n}<1$. Since $H$ is separable, it admits a countable basis, say $\{e_i\}$. Define $H_1=sp(e_1,\dotsc, e_n)$, and for $j\ge 1$, let $H_j=sp(e_{j+n})$.  Denote by $P_j$ the projection onto $H_j$ and define
\[\rho=\sum_{j=1}^\infty \lambda_j P_j.\]
Theorem \ref{t:unitroots} shows that the process \eqref{e:far1} defined with $\rho$ has the desired property.
\end{example}

\section{Illustrations with demographic data}\label{s:rex}

In this Section two data sets are briefly analyzed for the presence of functional unit roots.  We use Functional Principal Component Analysis (FPCA) and base our conclusions on the observation that the coefficient processes for some of these components are non-stationary.  The form of FPCA that we use is the more standard one, as can be found in \cite{ramsay05} and is implemented with the statistical software of \cite{R.lang}.  Specifically, we use the function \texttt{ftsm} of the R package \texttt{ftsa}.

The reason we do not use the recent Dynamic form of FPCA given in \cite{hKh15} is that what we need for the study of functional unit roots is a decomposition of the space $H$ in terms of the variability of $\{X_n\}$.  We are not, as such, interested in reducing dimensionality and following \cite{hormann2010}, we know that the consistent estimation of functional principal components is possible under quite a general dependence framework.

The idea of using FPCA to detect functional unit roots follows that of \cite{Snell99} for multivariate time series. Intuitively, FPCA will first find the eigenfunctions corresponding to non-stationary projections since these carry most of the variance.  The coefficient process for each principal component in a non-stationary subspace will thus exhibit a unit root behavior.

\subsection{Male log-mortality in Italy}
Our first data sets consists of measurements of the male mortality in Italy.  The data set is available in the Human Mortality Database website (www.mortality.org).  The time span for the data ranges from 1872 to 2012.  Ages from $0$ to $110$ are included in each year.  We consider only male mortality in the age range of $10$ to $65$ avoiding some noisy measurements.  We focus on the years from 1959 to 2012 to be certain that we avoid the effect of the World Wars which, evidently, create higher mortality rates for distinct groups of age.

\figuraqui{Male log-mortality in Italy from 1959 to 2012}{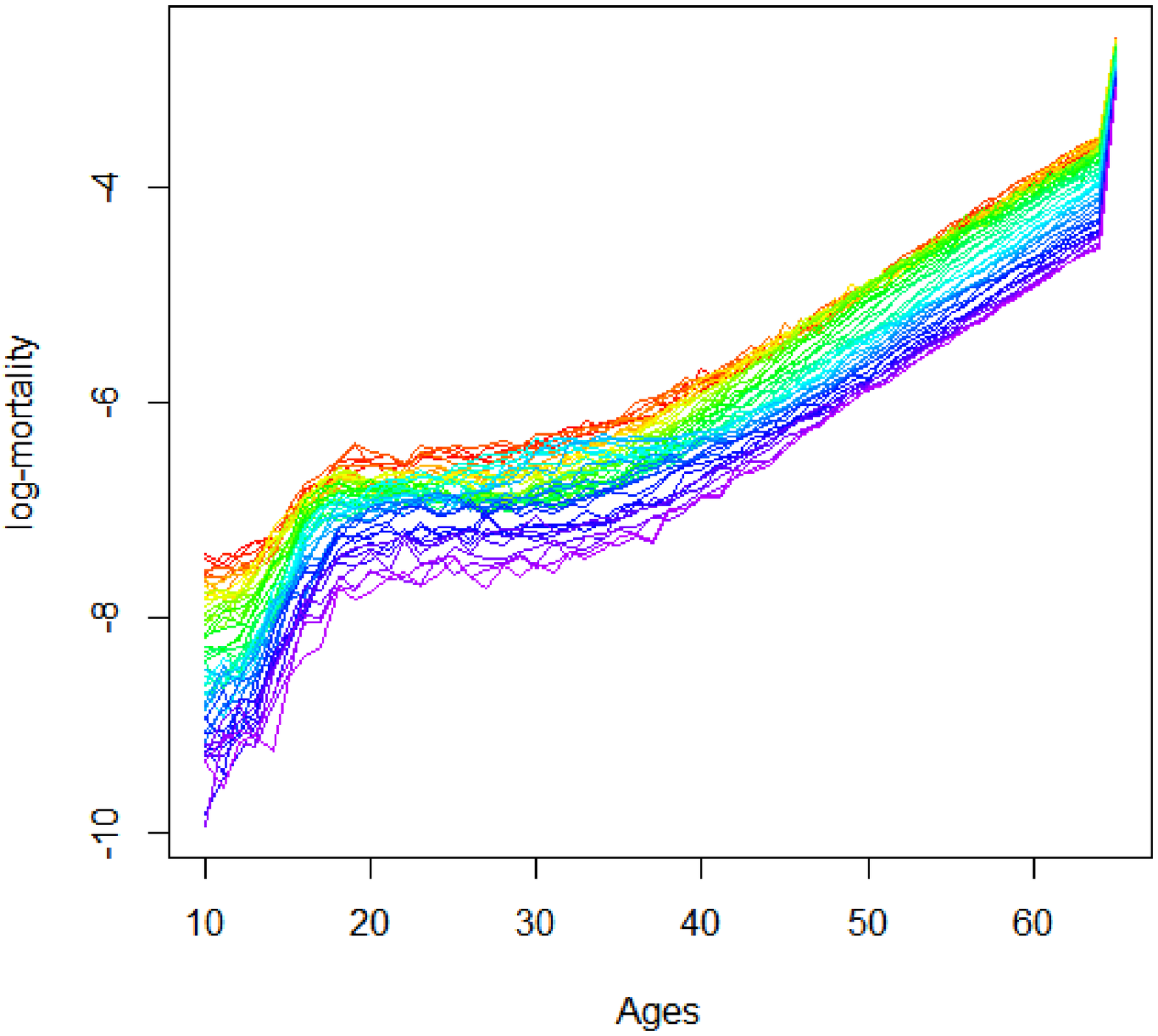}{3}\label{f:lm}

Figure 1 suggests that this functional time series is non-stationary since a clear decreasing rate in mortality is visible. What is not immediate is that this rate is not purely deterministic.

The coefficient process for each of the first three principal components is shown in Figure 2. An augmented Dickey--Fuller test on the first coefficient process indicates that it is best modeled by a simple random walk.  The test statistic is found to be $\tau=0.9599$ with critical values $-2.6, -1.95$ and $-1.61$ for the usual levels of significance.  This shows that the apparent decline in mortality contains both, a deterministic and a stochastic trend.

\figuraqui{Coefficients of the first three FPC for the Italian male log-mortality data. The continuous line corresponds to the first component}{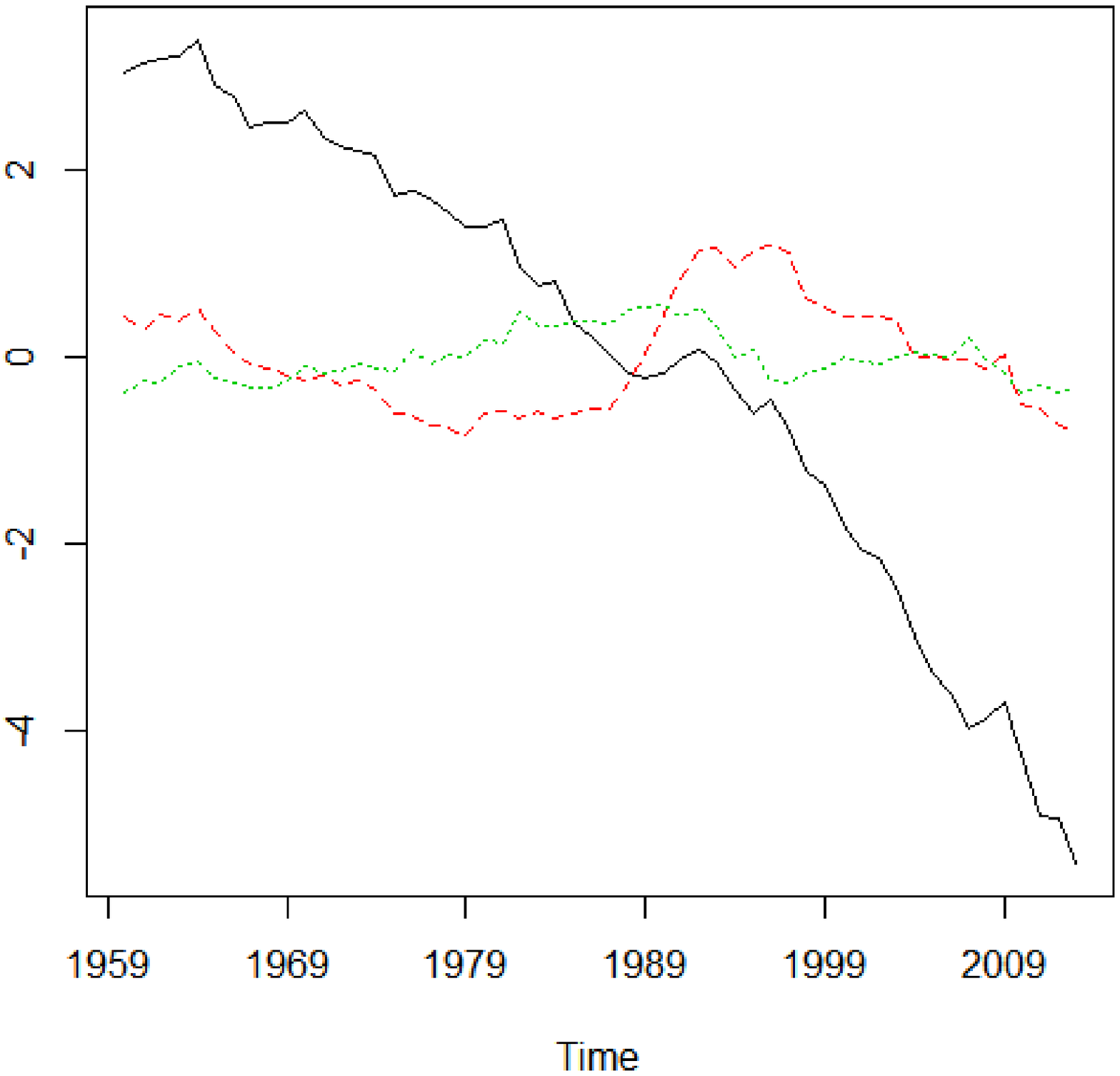}{3}

\subsection{Fertility rates in Australia}
The second data set we will use consists of age-specific fertility rates between ages 15 and 49 in Australia.  The time span of this data goes from 1921 to 2006.  It is accessible as a part of the R package \texttt{rainbow} and is smoothed with B-Splines.

\figuraqui{Australian Fertility Rates in Australia from 1921 to 2006.}{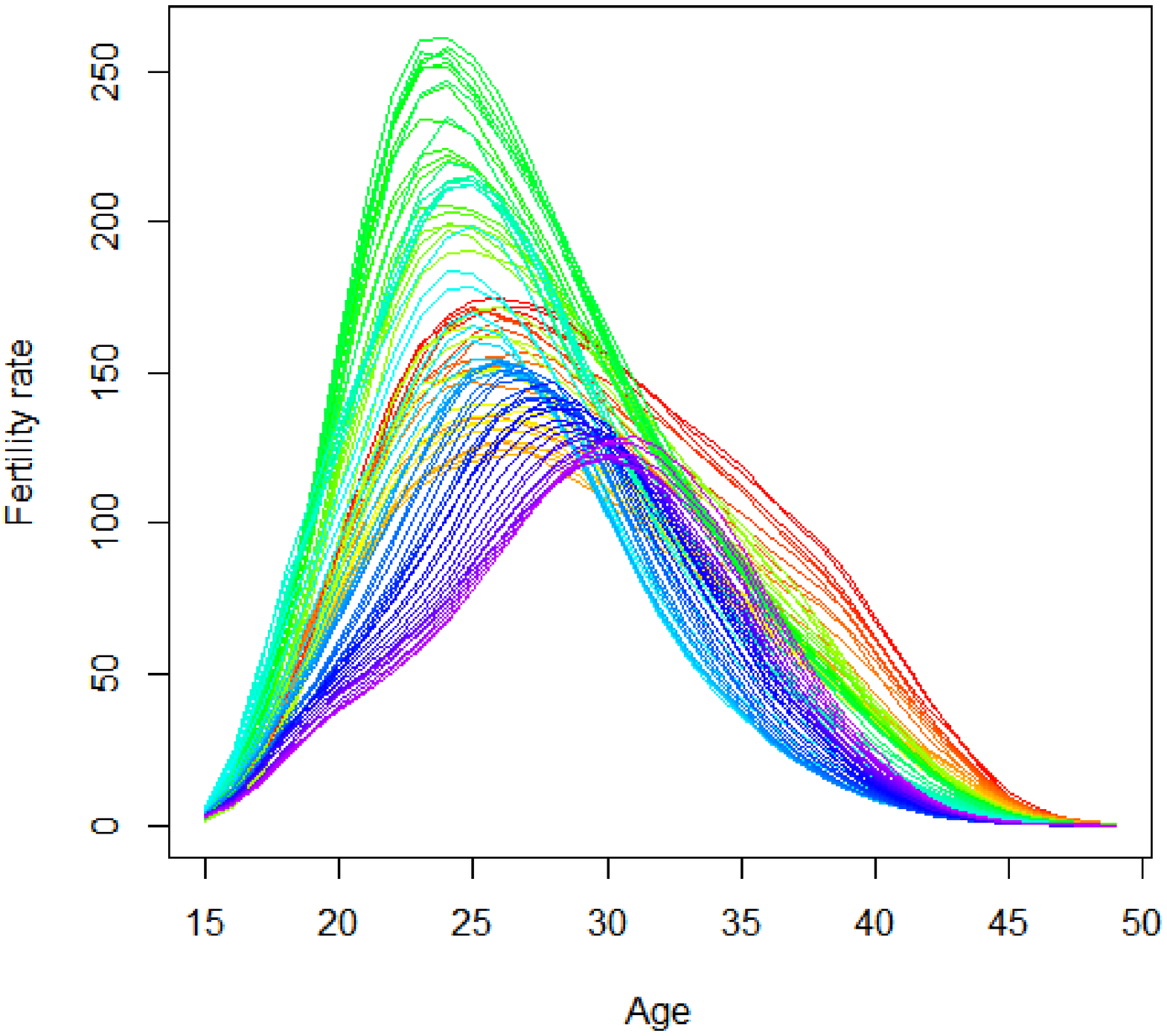}{3}

Figure 3 suggests non-stationarity and, as in the previous example, a decay in time.  Again, it is not clear if this non-stationarity is removable by some form of detrending.  As can be seen in Figure 4, the first two principal components induce apparently non-stationary coefficient processes.  Johansen's cointegration test was performed and at a significance level of $1\%$, the hypothesis of $0$ cointegrating relations cannot be rejected.   This result is consistent with the interpretation that the two first principal components are an empirical orthonormal basis of the common trends space.

\figuraqui{Coefficient processes for the first three principal components of the Australian Fertility Data. The dashed line corresponds to the last component.}{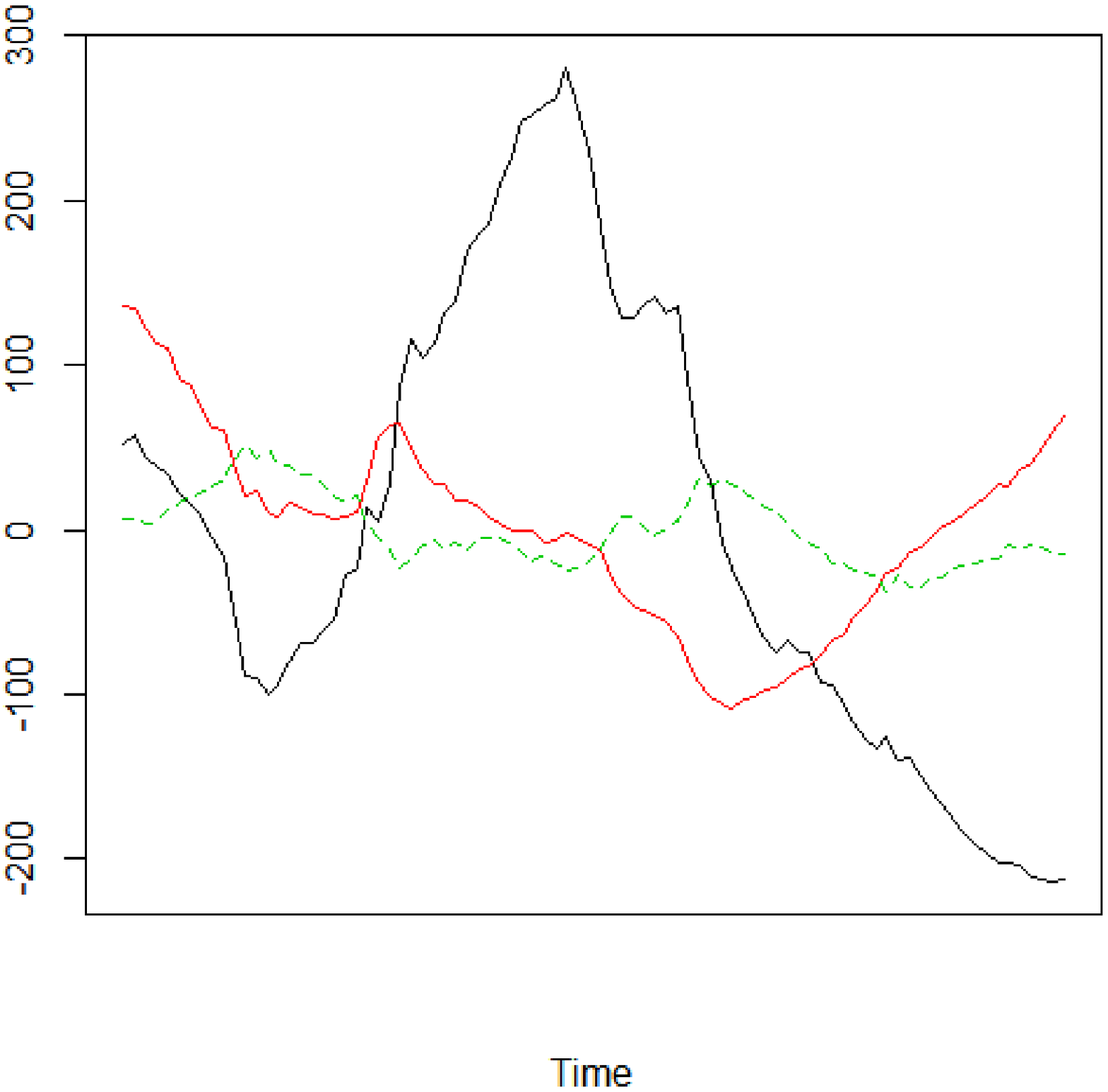}{3}

\section{Concluding Remarks}
To conclude our exposition, we make some remarks on our assumptions and on possible generalizations of our results.

\subsection{The order of the AR specification}
As we have tried to argue, the dynamics of the functional AR(1) are rich enough for most applications.  Following \cite{Kokoszka12}, however, the case $p=2$ is also worth examining. We will provide some remarks on this case.  Let $\rho_1, \rho_2\en\ops$ and define

\[X_n = \rho_1(X_{n-1})+\rho_2(X_{n-2})+\gep_n.\]

A common practice for the study of this process is using its Markovian representation in $H\otimes H$, namely
\[\left(\begin{array}{cc} X_{n}\\ X_{n-1}\end{array}\right) = \left(\begin{array}{cc} \rho_1&\rho2\\1&0 \end{array}\right) \left(\begin{array}{cc} X_{n-1}\\ X_{n-2}\end{array}\right)+\left(\begin{array}{cc} \gep_{n}\\ 0\end{array}\right).\]

The first difficulty that arises from this representation is that the operator thus constructed in $H\otimes H$ is not compact even if $\rho_1$ and $\rho_2$ both are, except when $H$ is finite-dimensional.  This is a consequence of the fact that the identity operator is only compact in such spaces.  Therefore, a treatment as the one given in this paper is not susceptible of generalization with this technique.  Nonetheless, something can be said.

Assume, for example, that both operators $\rho_1$ and $\rho_2$ are compact.  Let $\lambda_1$ and $\lambda_2$ be two respective eigenvalues and assume that $Ker(\rho_1^*-\lambda_1)\cap Ker(\rho_2^*-\lambda_2)\neq \emptyset$.   An element $v\en H$ thus exists such that
\[\prodint{X_n}{v}=\lambda_1\prodint{X_{n-1}}{v}+\lambda_2\prodint{X_{n-2}}{v}+\prodint{\gep_n}{v},\]
which shows that $v$ is a weak unit root if $\lambda_1+\lambda_2 = 1$.  Furthermore, the projections $\Pi_1$ and $\Pi_2$ onto $Ker(\rho-\lambda_1)$ and $Ker(\rho-\lambda_2)$ commute and the process $\{Y_n=\Pi_1\Pi_2 X_n\}$ can be represented as

\[\left(\begin{array}{cc} Y_{n}\\ Y_{n-1}\end{array}\right) = \left(\begin{array}{cc} \lambda_1&\lambda_2\\1&0 \end{array}\right) \left(\begin{array}{cc} Y_{n-1}\\ Y_{n-2}\end{array}\right)+\left(\begin{array}{cc} \gep_{n}\\ 0\end{array}\right).\]

Simple calculations show that $1$ is an eigenvalue of this new operator only when $\lambda_1+\lambda_2=1$.  We can then state that a unit root is present in an AR(2) process whenever
\begin{enumerate}
\item Both operators, $\rho_1$ and $\rho_2$ are compact,
\item $1\en\sigma(\rho_1)+\sigma(\rho_2)$,
\item There exist values $\lambda_i\en\sigma(\rho_i), i=1,2$, adding up to one such that $Ker(\rho_1^*-\lambda_1)\cap Ker(\rho_2^*-\lambda_2)\neq\emptyset.$
\end{enumerate}

Furthermore, if there are exactly $N$ pairs of null-spaces satisfying (3.) above, and if $\dim(Ker(\rho_1^*-\lambda_i)\cap Ker(\rho_2^*-\lambda_i))=n_i$, then the process has $n=\sum_{i=1}^N n_i$ common trends.

What this brief exam of the AR(2) specification intends to show is that despite the fact that a generalization of the techniques used in this paper is not immediate, some of the main ideas can indeed be useful.

From the point of view of applications, an empirical assessment of unit root behaviour is possible through the use of Functional Principal Component Analysis.

\subsection{Compacity of $\rho$}
Compacity played an important part in our developments.  The two most important consequences of this hypothesis are
\begin{enumerate}
\item The spectrum of $\rho$ is discrete, and every non-zero element of it is an eigenvalue
\item for every $\lambda\en\sigma(\rho)$, the dimension of $Ker(\rho-\lambda)$ is finite.
\end{enumerate}
It is the interplay of these structural properties of compact operators that makes functional unit roots similar to finite-dimensional cointegration.  As we have tried to argue before, this assumption is not quite a restrictive one since most operators used in empirical research are either finite-rank or integral, thus compact.

A first step in dropping the assumption of compacity is to assume that $\rho$ is \textit{power compact}, that is, $\rho^j$ is compact for some $j\en \mathbb{N}$.  In this case, for all $\lambda\neq 0$ the operator $\rho-\lambda$ is Fredholm and the spectrum of $\rho$ consists of eigenvalues of finite multiplicity.  Theorem \ref{t:unitroots} can then be proven by means of localization. See Proposition 1 of \cite{Konig2001941}.

Disregarding compacity altogether, we may define a unit root by requiring that $1$ is an eigenvalue of $\rho$.  This immediately gives
\[\prodint{X_n}{v}=\prodint{X_{n-1}}{v}+\prodint{\gep_n}{v},\]
for all $v\en Ker(\rho^*-1)$.  Therefore, a space of common trends exists and projection onto $Ker(\rho-1)$ is possible as in Theorem \ref{t:unitroots}.  The main difference is that this space may fail to be finite-dimensional.

\section*{Acknowledgments}
We are grateful for the  partial support form project  ECO2013-46395-P , Spain and
LEMME No. 264542, CONACyT , Mexico

\bibliographystyle{abbrvnat}
\setcitestyle{authoryear, open={(},close={)}}
\bibliography{biblioMaster}

\end{document}